\newtheorem{thm}{Theorem}[section]
\newtheorem{thm*}{Theorem}
\newtheorem{lemma}[thm]{Lemma}
\newtheorem{lemma*}[thm*]{Lemma}
\newtheorem{prop}[thm]{Proposition}
\theoremstyle{definition}
\DeclareMathOperator{\Free}{Free}
\newcommand{\from}{\colon}
\newcommand{\N}{\mathbb{N}}
\renewcommand{\subset}{\subseteq}
\renewcommand{\supset}{\supseteq}
\newcommand{\baire}{\N^\N}
\newcommand{\cantor}{2^\N}
\newcommand{\ramsey}{[\N]^\N}
\newcommand{\restrict}{\restriction}
\newcommand{\union}{\cup}
\newcommand{\bigunion}{\bigcup}
\newcommand{\inters}{\cap}
\newcommand{\biginters}{\bigcap}
\newcommand{\define}[1]{\emph{#1}} 
\begin{document}

\thanks{C.C. was partially supported by NSF grant DMS-1500906. A.M. was partially supported by NSF grant DMS-1500974.}

\author{Clinton T.~Conley}
\address{Department of Mathematical Sciences, 
Carnegie Mellon University, Pittsburgh, PA 15213, U.S.A.}
\email{clintonc@andrew.cmu.edu}

\author{Andrew S.~Marks}
\address{Department of Mathematics, University of California at Los Angeles}
\email{marks@math.ucla.edu}

\title{Distance from marker sequences in locally finite Borel graphs}

\date{\today}

\maketitle

\section{Introduction}

The investigation of structure in marker sequences has been a recurring
theme of the study of countable Borel equivalence relations and Borel
graphs. Recall that if $E$ is a countable Borel equivalence relation on a
standard Borel space $X$, then we say that $A \subset X$ is a
\define{complete section} or \define{marker set} for $E$ if $A$ meets every
$E$-class. We similarly say that $A$ is a marker set for a locally
countable graph on $X$ if $A$ is a marker set for the connectedness
relation of $G$. Finally, a \define{marker sequence} $\{A_n\}_{n \in \N}$
for a countable Borel equivalence relation or graph is a countable sequence
of marker sets.

The
study of marker sequences forms the backbone of our understanding of
what groups generate
hyperfinite equivalence relations \cite{GJ} \cite{ScSe}, and the
combinatorics of Borel graphs generated by these group actions
\cite{GJKS}. More broadly, these ideas underlie many constructions in the
study of Borel graph combinatorics \cite{KM}. The first result about
markers was proved by Slaman and Steel \cite{SlSt} who showed that for every countable
Borel equivalence relation $E$ there is a decreasing sequence $A_0 \supset
A_1 \supset \ldots$ of markers with empty intersection $\biginters_n A_n =
\emptyset$.

Suppose $\Gamma$ is a finitely generated  group which acts on the space $2^\Gamma$
via the left shift action. Let $\Free(2^\Gamma)$ be the set of $x \in
2^\Gamma$ such that for all nonidentity $\gamma \in \Gamma$ we have $\gamma
\cdot x \neq x$, and let $G(\Gamma,2)$ be the graph on $\Free(2^\Gamma)$
where $x, y \in \Free(2^\Gamma)$ are adjacent if there is a generator
$\gamma$ of $\Gamma$ such that $\gamma \cdot x = y$. Let $d_{G(\Gamma,2)}$
be the graph distance metric for $G(\Gamma,2)$.
A recent result of Gao, Jackson, Krohne, and Seward states the following.

\begin{thm}[{\cite[Theorem 1.1]{GJKS}}]\label{thm:GJS}
Suppose $\Gamma$ is a finitely generated infinite group and $f \from \N \to
\N$ tends to infinity. Then for every Borel marker sequence $\{A_n\}_{n \in
\N}$ for $G(\Gamma,2)$, there exists an $x
\in \Free(2^\Gamma)$ such that for infinitely many $n$, we have 
$d_{G(\Gamma,2)}(x,A_n) < f(n)$
\end{thm}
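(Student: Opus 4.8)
The plan is to argue the contrapositive by Baire category on $\Free(2^\Gamma)$, exploiting that a Borel complete section cannot be topologically small, and then to construct the desired point by a recursion exhausting $\Gamma$. (Measure theory does not suffice here: although one checks that each $A_n$ even has positive measure, an $f$-escaping marker sequence necessarily has $\mu(A_n)\to 0$, and the point we seek lies in a null set.)

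First I would record two facts. (a) $\Free(2^\Gamma)$ is comeager in the compact space $2^\Gamma$: for each $\gamma\neq e$ the closed set $\{x : \gamma\cdot x=x\}$ is nowhere dense, since $\Gamma$ is infinite so any finite partial function can be extended to break $\gamma$-periodicity. (b) Every Borel complete section $A$ for $G(\Gamma,2)$ is non-meager, hence comeager in some basic clopen set. Indeed the connectedness relation of $G(\Gamma,2)$ is the shift orbit equivalence relation restricted to $\Free(2^\Gamma)$, so $\bigunion_{\gamma\in\Gamma}\gamma\cdot A\supset\Free(2^\Gamma)$; if $A$ were meager then, $\Gamma$ being countable and each $\gamma\cdot A$ meager, $\Free(2^\Gamma)$ would be meager, contradicting (a) together with the fact that $2^\Gamma$ is a Baire space. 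A non-meager Borel set is comeager in a basic clopen set.

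Now suppose the conclusion fails, so $\Free(2^\Gamma)=\bigunion_N X_N$ with $X_N=\{x:\forall n\ge N,\ d_{G(\Gamma,2)}(x,A_n)\ge f(n)\}$ an increasing sequence of Borel sets. By Baire category there are $N$ and a basic clopen $[t]$ with $X_N$ comeager in $[t]\inters\Free(2^\Gamma)$. I want to contradict this. For each $n$ fix by (b) a basic clopen $[s_n]$ with $A_n$ comeager in $[s_n]$, and let $M_n\supset[s_n]\setminus A_n$ be meager; then any $x$ that avoids the (meager) set of non-free points and avoids every translate $\gamma^{-1}\cdot M_n$ has the property that $\gamma\cdot x\in[s_n]$ implies $\gamma\cdot x\in A_n$. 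Hence it suffices to build $x\in[t]$ lying in all of these (countably many) comeager sets and such that, for some $n\ge N$, there is $\gamma$ with $|\gamma|<f(n)$ and $\gamma\cdot x\in[s_n]$ — i.e., $x$ restricted to the finite set $\gamma^{-1}\cdot\dom(s_n)$ equals a prescribed pattern. Building $x$ by a recursion that exhausts $\Gamma$, the genericity and freeness requirements are met by the usual interleaving, so everything reduces to a single combinatorial step: fitting the pattern $s_n$ within graph-distance $f(n)$ of the identity, consistently with the finitely much of $x$ decided so far.

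This fitting step is the main obstacle and the real content of the theorem. The difficulty is that $f$ may tend to infinity arbitrarily slowly, so the balls of radius $f(n)$ in the Cayley graph grow slowly, while nothing above prevents $A_n$ from being comeager only in cylinders $[s_n]$ whose domains grow much faster (for instance if $A_n$ contains a large cylinder). A naive counting argument — comparing $|B(e,f(n))|$ with $|\dom(s_n)|$ — therefore fails, and one must locate the ``largeness'' of $A_n$ more cleverly. The route I would pursue is to pass to an infinite minimal subshift $Y\subset\Free(2^\Gamma)$ of low complexity: inside $Y$, every nonempty relatively clopen piece recurs with a gap controlled by its complexity, $A_n\inters Y$ is still a complete section for the orbit relation on $Y$ (hence comeager in some relatively clopen piece of $Y$), and one tries to achieve $d_{G(\Gamma,2)}(x,A_n)<f(n)$ along an infinite set of $n$ by playing this controlled recurrence against the growth of $f$ — using crucially that $\{A_n\}$ is given in advance, so only infinitely many $n$ need be handled. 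Making this last comparison go through is where the proof must do genuine work.
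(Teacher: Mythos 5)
This theorem is not proved in the paper you are reading: it is quoted verbatim from Gao--Jackson--Krohne--Seward and cited as \cite[Theorem 1.1]{GJKS}, so there is no in-text argument to compare against. Judged on its own terms, your proposal is an honest but incomplete sketch, and you say as much yourself. The setup is sound: $\Free(2^\Gamma)$ is comeager in $2^\Gamma$, every Borel complete section is non-meager (because $\bigunion_\gamma \gamma\cdot A_n$ covers a comeager set and $\Gamma$ is countable), and hence each $A_n$ is comeager in some basic clopen $[s_n]$; likewise, the reduction to finding, generically within some $[t]$ where $X_N$ is comeager, a single $n\geq N$ and $\gamma$ with $|\gamma|<f(n)$ and $\gamma\cdot x\in[s_n]$ is correctly organized. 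But that reduction is precisely where the proof stops being routine, and you flag the obstacle accurately: nothing controls $|\dom(s_n)|$ in terms of $f(n)$, so for small $\gamma$ the translated cylinder $\gamma^{-1}[s_n]$ may be incompatible with $[t]$ and with the finite conditions already committed, and genericity alone does not rescue this. Stated differently, a Borel complete section is non-meager, but there is no a priori bound on where (i.e., in how refined a cylinder) it is comeager, and $f$ may grow too slowly to reach such a cylinder from a fixed base.

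The direction you gesture at in the last paragraph --- pass to a minimal (in GJKS, a hyper-aperiodic/$2$-coloring) subshift $Y$, note that $A_n\inters Y$ is still a complete section for the orbit relation on $Y$ and hence relatively comeager in some relatively clopen piece, and exploit the syndetic recurrence of that piece --- is indeed the route GJKS take via a forcing argument. But the gap you leave (``making this last comparison go through'') is the entire content: the syndetic constant attached to the clopen piece where $A_n\inters Y$ is comeager still depends on $A_n$, which is adversarial, and one must explain why, for infinitely many $n$, that constant can be kept below $f(n)$. This requires the specific combinatorial structure of the subshift $Y$ (its controlled recurrence/complexity) together with a diagonalization against the countable sequence $\{A_n\}$, which your sketch does not carry out. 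In short: the category framing is fine, the identified difficulty is the real one, but the proposal ends exactly where the theorem's proof must begin.
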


This result led us to ask the following question: what can we say if the
function $f \from \N \to \N$ is allowed to vary depending on the point $x$?
Of course, we cannot possibly draw an analogous conclusion for an arbitrary 
Borel way of associating some $f_x \from \N \to \N$ to each
point $x$ in our space; given a Borel marker sequence $\{A_n\}_{n \in \N}$
for a graph $G$ on $X$, we could define $f_x(n) = d_G(x,A_n)$ for all $x
\in X$. Instead, we show the existence of some Borel map
$x \mapsto f_x$ for which we can draw a stronger conclusion than
that of Theorem~\ref{thm:GJS}, showing closeness for all $n$ instead of just infinitely
many $n$. This is true even
when we generalize to arbitrary locally finite non-smooth graphs.

\begin{thm}\label{thm:main}
  Suppose $G$ is a locally finite non-smooth Borel graph on $X$. Then there
  exists a Borel map associating to each $x \in X$ a function $f_x \from
  \N \to \N$ such that for every Borel marker sequence $\{A_n\}_{n \in \N}$ for
  $G$, there exists an $x \in X$ such that for all $n$, we have
  $d_G(x,A_n) < f_x(n)$.
\end{thm}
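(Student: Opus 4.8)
My approach would be to extract the good point only after the marker sequence is revealed, from an $f$ defined via an intrinsic ``scale'' structure that is generically unbounded in $x$; non-smoothness enters both to supply that structure and, crucially, to stop the adversary from defeating it componentwise. First I would reduce to a convenient presentation. Using local finiteness, write $G$ as a countable union of Borel partial matchings $M_0,M_1,\dots$; the involutions $\sigma_i$ swapping endpoints of the edges of $M_i$ generate a countable group $\Gamma$ whose orbit equivalence relation is $E_G$ and whose word metric is exactly $d_G$. By Becker--Kechris I may take this $\Gamma$-action continuous for a finer Polish topology with the same Borel sets, and non-smoothness then yields generic ergodicity: $E_G$-invariant Borel sets are meager or comeager, and saturations of nonempty open sets are dense. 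The one thing a marker sequence is guaranteed to do is meet every component (so $d_G(x,A_n)<\infty$ always), and this alone is not enough: for the \emph{smooth} graph $G$ on $\Z\times 2^\N$ with components the fibered $\Z$-lines and the obvious Borel transversal, any Borel $f$ is defeated at every point by the marker sequence $A_n:=\{(k_n+f_{(k_n,y)}(n),\,y):y\in 2^\N\}$ (with $k_n$ the $n$th integer), since for the $n$ with $k_n=k$ one has $d_G((k,y),A_n)=f_{(k,y)}(n)$. A marker set there may be chosen component by component; the Borel structure of a non-smooth graph forbids this, and that is the leverage to be exploited.

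To make that leverage usable I would pass to a hyperfinite skeleton. A Borel maximal $G$-independent set is a $1$-dense Borel complete section, and restriction to a complete section preserves non-smoothness, so Harrington--Kechris--Louveau applied inside it produces a Borel $Z$ with $E_G\restriction Z\cong E_0$, hence a canonical filtration $F_0\subset F_1\subset\cdots$ of $E_G\restriction Z$ by finite equivalence relations with $|[z]_{F_k}|=2^k$; after restricting to the $E_G$-saturation of $Z$ (and defining $f$ arbitrarily off it) every component meets $Z$. For $z\in Z$ let $\rho(z,k):=\max\{d_G(z,z'):z'\mathrel{F_k}z\}$, a Borel finite quantity nondecreasing in $k$ which, along any branch whose component is unbounded in $Z$, tends to infinity; set $f_{\iota(s)}(n):=\rho(\iota(s),\psi(n))+1$ for a very rapidly growing $\psi$ to be calibrated later. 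The design principle is that the closed $d_G$-ball of radius $\rho(\iota(s),\psi(n))$ about $\iota(s)$ contains the entire $F_{\psi(n)}$-class of $\iota(s)$, so $d_G(\iota(s),A_n)<f_{\iota(s)}(n)$ holds as soon as $A_n$ meets that ball — and since $f$ may take arbitrarily large values, we are free to let $\psi$, and hence the ball, be as large as the argument demands.

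The heart of the proof, and where I expect the real work, is to show that for a given Borel marker sequence $\{A_n\}$ some single $s\in2^\N$ works at every scale $n$. I would attempt a fusion descending through admissible finite $F$-blocks: having committed to a block at stage $n-1$, pass to a sub-block whose $d_G$-ball already meets $A_n$ (using that $A_n$, meeting the ambient component, must come near $Z$ once the scale is deep enough) and continue, choosing the component and the branch $s$ as the fusion runs; the calibration of $\psi$ must make each stage compatible with all earlier ones. The structural input driving this is that a Borel complete section cannot lie far from $Z$ on every component simultaneously — a genericity/counting statement in the family of the Slaman--Steel vanishing-markers theorem, and exactly what fails in the smooth case. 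The principal obstacles I foresee: (i) the scale at which $A_n$ first becomes ``visible'' near $Z$ is unbounded over marker sequences, so no fixed $\psi$ works uniformly — resolved because $f_x$ is allowed to be generically unbounded in $x$ and $s$ is chosen only after $\{A_n\}$, so the fusion may descend as deep as the particular sequence requires; and (ii), more serious, when $E_G$ is not hyperfinite there are components in which $Z$ — or any hyperfinite skeleton — is not coarsely dense, so $A_n$ may sit in an arbitrarily deep hole away from $Z$ and the skeleton-based $f$ cannot reach it; handling this, presumably by building $f$ from a richer structure than a hyperfinite skeleton (or by a separate treatment of the non-hyperfinite part), is, I expect, the real crux of the theorem.
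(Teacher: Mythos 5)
Your proposal stops short of a proof at exactly the point you flag as obstacle (ii), and that obstacle is genuine: an HKL copy $Z$ of $E_0$ is not $E_G$-invariant, so a Borel marker sequence is only required to meet each (possibly non-hyperfinite) component, not to come anywhere near $Z$, and certainly not within the skeleton-controlled radii $\rho(z,\psi(n))$. The "genericity/counting statement" you appeal to (a Borel complete section cannot stay far from $Z$ on every component) is not established, and in the form you need it — far being measured against the scales $\rho(\cdot,\psi(n))$ fixed in advance together with $Z$ and $\psi$ — it is not available: nothing ties the geometry of $[z]_{F_k}$ inside $G$ to the distance from $z$ to an adversarially chosen marker set. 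Your resolution of obstacle (i) has the same flaw: choosing the witness point after seeing $\{A_n\}$ only helps if the function's values at that point can be forced, during the construction, to exceed whatever distances the sequence realizes, and $\rho(z,\psi(n))+1$ offers no such mechanism. This adaptivity is precisely the engine of the actual argument: working on Ramsey space $\ramsey$ with $f_x(n)$ equal to the $(n+1)$st element of $x$, a Galvin--Prikry fusion first homogenizes the Borel partition by distance to $A_n$ on a cube, then inserts a large new stem element, which simultaneously makes $f_x(n)$ large and bounds $d(x,A_n)$; an abstract Borel copy of $E_0$ carries no completely Ramsey structure with which to run such a homogenization, and your sketch supplies no substitute.

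The paper also sidesteps your non-hyperfiniteness worry entirely, in a way worth comparing with your plan. Instead of a non-invariant skeleton, it uses the strengthened Glimm--Effros fact that a non-smooth $E$ admits an $E$-\emph{invariant} Borel set $A$ with $E \restrict A \cong_B E_t$ (tail equivalence). Invariance is what makes markers usable: $\{A_n \cap A\}$ is automatically a marker sequence for $G \restrict A$, and components inside $A$ are entire $E$-classes. Then Lemma~\ref{lemma:graphing_invariant} — a pointwise distance comparison between any two locally finite Borel graphings of the same classes, which plays the role your $\rho$ was meant to play but for full classes rather than a thin subset — transfers the Ramsey-space result for the graph $G_t$ (Lemma~\ref{lemma:E_t-graph}) to $G \restrict A$, and $f$ is extended arbitrarily off $A$. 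So the two missing ingredients in your outline are (a) a homogenization device (Galvin--Prikry on $\ramsey$) that lets the fusion beat each $A_n$ while simultaneously setting $f_x(n)$, and (b) the passage to an invariant hyperfinite part plus the graphing-invariance lemma, which together eliminate the "marker hides far from the skeleton" scenario rather than confronting it.
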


Now when $G$ is smooth, an easy diagonalization constructs marker sequences that
do not satisfy the conclusion of our theorem.
Hence, this result provides a novel way of
characterizing smoothness; a locally finite Borel graph is smooth if and
only if 
it does not admit Borel marker sequences that are somewhere ``far''
from every point.

We remark here that in Theorem~\ref{thm:main}, the map $x \mapsto f_x$ may
always be chosen so that it is a Borel
homomorphism from the equivalence relation graphed by $G$ to tail
equivalence on $\baire$. We note that standard ``sparse'' Borel marker
sequence constructions show that a map $x \mapsto f_x$ witnessing
Theorem~\ref{thm:main} cannot take a constant value on each connected
component of $G$. 

The theorem is proved in two steps. First, we use the fact that all Borel
subsets of $\ramsey$ are completely Ramsey to give an example of a Borel
graph $G$ satisfying the conclusion of Theorem~\ref{thm:main}. Then, we
conclude the full result using the Glimm-Effros dichotomy. We show in the
last section that this theorem cannot be proven using measure or category
arguments.

\section{Distance from marker sequences and the Ramsey property}

Let $\ramsey$ be Ramsey space, the space of infinite subsets of $\N$. Given a finite set $s \subset \N$ and an infinite
set $x \subset \N$ with $\min(x) > \max(s)$, recall the definition $[s,x]^\N = \{y \in
\ramsey : s \subset y \subset s \union x\}$.
We
can identify $\ramsey$ with a subset of $\cantor$ via characteristic
functions, and we use the resulting subspace topology on $\ramsey$
throughout. 
A theorem of Galvin and
Prikry~\cite{GP} states that for every $[s,x]^\N$ and every Borel subset $B \subset
[s,x]^\N$, there exists some $[t,y]^\N \subset [s,x]^\N$
such that either $[t,y]^\N \subset B$ or $[t,y]^\N \inters B = \emptyset$. 
From this, it is easy to see the following:
\begin{lemma}[Galvin-Prikry~\cite{GP}] \label{lemma:GP}
If 
$\{B_n\}_{n \in
\N}$ is a Borel partition of $[s,x]^\N$, then there exists some $n \in N$
and $[t,y]^\N \subset [s,x]^\N$ such that $[t,y]^\N \subset B_n$. 
\end{lemma}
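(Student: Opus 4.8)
The plan is to deduce this partition statement from the dichotomy version of the Galvin--Prikry theorem quoted just above, via a fusion argument. Let $B_n$ denote the $n$-th piece of the partition. I would try to construct recursively a decreasing sequence $[s,x]^\N = [s_0,x_0]^\N \supset [s_1,x_1]^\N \supset \cdots$ of basic sets with $[s_{n+1},x_{n+1}]^\N$ homogeneous for $B_n$, as follows. At stage $n$, first throw $\min(x_n)$ into the stem, passing to the basic set $[s_n \union \{\min(x_n)\},\, x_n \setminus \{\min(x_n)\}]^\N$; then apply the dichotomy to the Borel set $B_n \inters [s_n \union \{\min(x_n)\},\, x_n \setminus \{\min(x_n)\}]^\N$ to get $[s_{n+1},x_{n+1}]^\N$ inside this set with either $[s_{n+1},x_{n+1}]^\N \subset B_n$ or $[s_{n+1},x_{n+1}]^\N \inters B_n = \emptyset$. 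If the first alternative ever occurs, stop: the resulting $[s_{n+1},x_{n+1}]^\N$ is contained in $[s,x]^\N$ and witnesses the lemma.

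The heart of the matter, and the one step I expect to require real care, is showing the recursion cannot run forever. The construction is rigged so that the stem strictly grows: since $\min(x_n) > \max(s_n)$ we have $\min(x_n) \notin s_n$, and $[s_{n+1},x_{n+1}]^\N \subset [s_n \union \{\min(x_n)\}, \ldots]^\N$ forces $s_n \union \{\min(x_n)\} \subset s_{n+1}$, whence $s_n \subset s_{n+1}$ with $\min(x_n) \in s_{n+1} \setminus s_n$, so $|s_n| \to \infty$. Put $s_\infty = \bigunion_n s_n$, an infinite set. One checks directly from the definition of $[s,x]^\N$ that each increment $s_{k+1} \setminus s_k$ lies in $x_k$ and that the $x_k$ are decreasing, so $s_\infty \setminus s_n \subset x_n$ and therefore $s_n \subset s_\infty \subset s_n \union x_n$; that is, $s_\infty \in [s_n,x_n]^\N$ for every $n$. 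In particular $s_\infty \in [s,x]^\N$, but $s_\infty \in [s_{n+1},x_{n+1}]^\N$ together with $[s_{n+1},x_{n+1}]^\N \inters B_n = \emptyset$ gives $s_\infty \notin B_n$ for all $n$, contradicting that $\{B_n\}_{n \in \N}$ is a partition of $[s,x]^\N$. Hence the recursion halts, yielding an $n$ and some $[t,y]^\N \subset [s,x]^\N$ with $[t,y]^\N \subset B_n$.

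Beyond the one appeal to the Galvin--Prikry dichotomy, the argument is routine. The only points needing attention are the elementary set-theoretic verifications, directly from the definition of $[s,x]^\N$, that enlarging the stem and shrinking the infinite part preserves the inclusion relation among basic sets (so the sequence really is decreasing, and each disjointness $[s_{n+1},x_{n+1}]^\N \inters B_n = \emptyset$ persists) and that the resulting $s_\infty$ lies in $\biginters_n [s_n,x_n]^\N$. No idea beyond the dichotomy is needed; this is just the standard fusion packaging.
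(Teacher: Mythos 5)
Your proof is correct and follows essentially the same route as the paper's own argument: a fusion construction applying the Galvin--Prikry dichotomy stage by stage to get a decreasing sequence of basic sets $[s_n,x_n]^\N$ disjoint from $B_n$ with strictly growing stems, so that the union of the stems is a point of $[s,x]^\N$ lying in no $B_n$, contradicting the partition hypothesis. The only difference is presentational (you run the recursion until the positive alternative occurs rather than assuming it never does, and you spell out the stem-growth and fusion verifications the paper leaves implicit).
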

\begin{proof}
  Suppose not. Then we may construct a decreasing sequence 
  $[s,x]^\N \supset [s_0, x_0]^\N \supset [s_1,x_1]^\N \supset \ldots$, where $[s_n,x_n]^\N
  \inters B_n = \emptyset$, and $s_n$ has at least $n$ elements. But then
  setting $z = \bigunion_n s_n$, we see that $z \in [s,x]$, and $z \notin
  B_n$ for all $n$, hence $\{B_n\}_{n \in \N}$ does not partition
  $[s,x]^\N$. 
\end{proof}

The \define{odometer} $\sigma \from \ramsey \to \ramsey$ is defined via the
identification of $\ramsey$ as a subspace of $\cantor$ by setting
$\sigma(x) = 0^n1y$ if $x = 1^n0y$, and fixing $\sigma(111\ldots) =
111\ldots$ on the sequence consisting of all ones. Define also $\tau \from \ramsey \to
\ramsey$ by setting $\tau(x) = \{n-1: n \in x \land n > 0\}$. Let $G_t$ be
the graph on $\ramsey$ generated by these two functions, where $x, y
\in \ramsey$ are adjacent if either $\sigma(x) = y$, $\sigma(y) = x$,
$\tau(x) = y$, or $\tau(y) = x$. So $G_t$ is graphing of tail equivalence
on $\ramsey$, and every vertex in $G_t$ has degree $\leq 5$. 

\begin{lemma}\label{lemma:E_t-graph}
  Consider the graph $G_t$ defined on $\ramsey$ as above, and 
  for each $x \in \ramsey$, let $f_x(n)$ be equal
  to the $(n+1)$st element of $x$. 
  Then for every Borel marker sequence $\{A_n\}_{n
  \in \N}$ for $G_t$, there is an $x \in \ramsey$ such that for all $n$, we
  have 
  $d_{G_t}(x,A_n) < f_x(n)$. 
\end{lemma}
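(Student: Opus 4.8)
The plan is to construct the required $x$ directly, as the union of a fusion sequence of stems, using the fact that all Borel subsets of $\ramsey$ are completely Ramsey once for each marker set $A_k$. Fix at the outset a very sparse infinite set $w_0 \subseteq \N$, say $w_0 = \{2^{2^k} : k \geq 1\}$; the relevant consequence is that if $v \subseteq w_0$ is infinite with increasing enumeration $v_0 < v_1 < \cdots$, then $v_{N+1} > (v_N)^2$ for all $N$. I will build a decreasing sequence of basic sets $[\emptyset,w_0]^\N \supseteq [s_1,w_1]^\N \supseteq [s_2,w_2]^\N \supseteq \cdots$ with $|s_k|=k$, each $w_k \subseteq w_0$, and satisfying the invariant: for all $j<k$ and all $x \in [s_k,w_k]^\N$, $d_{G_t}(x,A_j) < f_x(j)$. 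Since $|s_k| = k \to \infty$ and $s_{k+1}\setminus s_k \subseteq w_k$ at every stage, the set $x := \bigcup_k s_k$ is infinite and lies in every $[s_k,w_k]^\N$; hence $d_{G_t}(x,A_j) < f_x(j)$ for all $j$, which is the conclusion.

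For the step from $[s_k,w_k]^\N$ to $[s_{k+1},w_{k+1}]^\N$, consider the Borel set $B = \{x \in \ramsey : d_{G_t}(x,A_k) \geq f_x(k)\}$. By complete Ramsey-ness there is an infinite $w_k' \subseteq w_k$ with either $[s_k,w_k']^\N \subseteq B$ or $[s_k,w_k']^\N \cap B = \emptyset$. In the second case we are done with the step: set $s_{k+1} = s_k \cup \{\min w_k'\}$ and $w_{k+1} = w_k' \setminus \{\min w_k'\}$; one checks routinely that $[s_{k+1},w_{k+1}]^\N$ is a legitimate basic set contained in $[s_k,w_k']^\N$, hence good for $A_k$, and (being inside $[s_k,w_k]^\N$) still good for $A_0,\dots,A_{k-1}$, while $|s_{k+1}| = k+1$. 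So the whole argument reduces to ruling out the first case, and this is where I expect the real work to lie.

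To rule it out, assume $[s_k,w_k']^\N \subseteq B$; I will exhibit a point of $[s_k,w_k']^\N$ not in $B$. The only fact needed about $G_t$ is that $\tau^i(y) = \tau^j(z)$ implies $d_{G_t}(y,z) \leq i+j$, via the path $y, \tau(y), \dots, \tau^i(y) = \tau^j(z), \dots, \tau(z), z$. Write $v_0 < v_1 < \cdots$ for $w_k'$. Since $G_t$ graphs tail equivalence and $A_k$ meets every $G_t$-component, the component of $w_k'$ contains some $a \in A_k$, and any two sets in a common component have equal iterated $\tau$-images, so there are $i_0,j_0 \in \N$ with $\tau^{i_0}(w_k') = \tau^{j_0}(a)$. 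Because $v_{N+1} > (v_N)^2$, for all sufficiently large $N$ we may choose an integer $m$ with $v_N < m < \tfrac12(v_{N+1} + i_0 - j_0)$; such an $m$ automatically exceeds $i_0$ and $\max s_k$ and satisfies $m \leq v_{N+1}$ once $N$ is large. Put $x := s_k \cup \{v_{N+1}, v_{N+2}, \dots\}$, a member of $[s_k,w_k']^\N$. Since $m$ exceeds every element of $s_k$ and $v_N < m \leq v_{N+1}$, one computes $\tau^m(x) = \{v_{N+1}-m, v_{N+2}-m,\dots\} = \tau^m(w_k') = \tau^{m-i_0+j_0}(a)$, so $d_{G_t}(x,a) \leq m + (m - i_0 + j_0) = 2m - i_0 + j_0 < v_{N+1}$. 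But the $(k+1)$st element of $x$ is $v_{N+1}$, i.e.\ $f_x(k) = v_{N+1}$, so $d_{G_t}(x,A_k) \leq d_{G_t}(x,a) < f_x(k)$, contradicting $x \in B$.

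Thus only the good case can occur, and the fusion runs to completion; at the very start one replaces the trivial basic set $[\emptyset,\N]^\N$ by $[\emptyset,w_0]^\N$ so that the construction begins from a sufficiently sparse top part. The conceptual point, and the main obstacle, is the previous paragraph: an arbitrary element of a basic set can be arbitrarily far from $A_k$, but sparseness of $w_k'$ forces the enormous gap $v_{N+1} > (v_N)^2$, and this gap makes room for a point $x$ whose first surviving coordinate $v_{N+1} = f_x(k)$ strictly exceeds the distance $\approx 2m$ needed to shift $x$ down onto the fixed class representative $a$. Everything else — Borelness of $B$, legitimacy of the basic sets produced, associativity of $\tau$, and inheritance of sparseness by infinite subsets of $w_0$ — is routine.
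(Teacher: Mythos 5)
Your argument is correct: the fusion skeleton is sound, the dichotomy you invoke is exactly the ``Borel sets are completely Ramsey'' fact (fixed stem, shrinking reservoir), and the refutation of the bad horn checks out --- with $\max s_k < v_0$, $v_N < m \le v_{N+1}$, $m \ge i_0$ one indeed gets $\tau^m(x)=\tau^m(w_k')=\tau^{m-i_0+j_0}(a)$, hence $d_{G_t}(x,A_k)\le 2m-i_0+j_0 < v_{N+1} = f_x(k)$, and such an $m$ exists for large $N$ because the gaps in any infinite subset of $w_0$ satisfy $v_{N+1}\ge v_N^2$ (note: $\ge$, not $>$, for $w_0=\{2^{2^k}\}$, which is all you need). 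The paper's proof has the same overall shape (build $x$ as a fusion of basic sets, handling one $A_n$ per stage, and make the $(n+1)$st element of the stem larger than the distance bound achieved), but the per-stage mechanism is different: the paper partitions $[s_n,x_n]^\N$ by the exact value of $d(\cdot,A_n)$, homogenizes via the partition form of Galvin--Prikry (allowing the stem to grow to some $t\supseteq s_n$), and then translates back to the stem $s_n$ by $m$ applications of the odometer $\sigma$, so that every point of $[s_n,y]^\N$ is within $k+m$ of $A_n$; the next stem element is then chosen above $k+m$. You instead apply the fixed-stem dichotomy to the single bad set $B=\{x: d_{G_t}(x,A_k)\ge f_x(k)\}$ and kill the ``homogeneously bad'' alternative outright by exhibiting a witness built from a marker point $a$ in the tail class of $w_k'$, using only the shift $\tau$ plus a pre-imposed sparse reservoir $w_0$. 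What the paper's route buys: no sparseness assumption is needed, and since it uses only $\sigma$ it adapts verbatim (as the authors remark) to the odometer graphing of $E_0$; what your route buys: it avoids the odometer translation entirely (only $\tau$-edges are used, so it applies to the shift graphing of tail equivalence), at the cost of fixing $w_0$ in advance and using the stronger fixed-stem form of the Ramsey theorem rather than the derived partition lemma.
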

\begin{proof}
  We construct $x$ as the intersection of a decreasing sequence $[s_0,
  x_0]^\N \supset [s_1,x_1]^\N \supset \ldots$, where $s_n$ has exactly
  $n$ elements. Let $s_0 = \emptyset$, and $x_0 = \N$. Now given
  $[s_n,x_n]^\N$, since the sets $\{\{y \in [s_n,x_n]^\N : d(y,A_n) =
  k\}\}_{k \in \N}$ partition $[s_n,x_n]$, we may apply
  Lemma~\ref{lemma:GP} to obtain some $[t,y] \subset
  [s_n,x_n]^\N$ and some $k$ such that every element of $[t,y]^\N$ is distance
  exactly $k$ from $A_n$. 
  Since $s_n \subset t$, there is
  some $m$ such that $m$ applications of the odometer applied to
  $[s_n,y]^\N$ yield
  $\sigma^m([s_n,y]^\N) = [t,y]^\N$. Hence by the triangle inequality, we see that there is some $k^* = k +
  m$ such that all the elements of $[s_n,y]$ are distance $\leq k^*$ from
  $A_n$. Now let $l$ be the least element of $y$ that is strictly greater than
  $k^*$ and $\max(s_n)$, let $s_{n+1} = s_n \union \{l\}$, and $x_{n+1}
  = y \setminus \{0, 1, \ldots, l\}$. We have ensured then that every
  element of $[s_{n+1},x_{n+1}]^\N$ has distance $\leq l$ from $A_n$.
\end{proof}

We remark here that the above proof works equally well for the usual
graphing of $E_0$ on $\ramsey$ induced by the odometer. We have used the
larger graph $G_t$ because we will need a locally finite graphing of tail
equivalence with our desired property in order to finish the proof of 
Theorem~\ref{thm:main}.

We need one more easy lemma before we complete the theorem. The lemma
roughly states that this question of closeness to marker sequences is
independent of the particular locally finite Borel graph we choose, and
depends only on the equivalence relation we have graphed.

Given a Borel graph $G$ on $X$, and a Borel map $x \mapsto f_x$ from $X \to
\baire$, say that a marker sequence $\{A_n\}_{n \in \N}$ \define{satisfies
$x \mapsto f_x$ for $G$} if for all $x \in X$ there exists an $n$ such that
$d_G(x,A_n) \geq f_x(n)$.

\begin{lemma}\label{lemma:graphing_invariant}
  Suppose $G$ and $H$ are locally finite Borel graphs on a standard Borel
  space $X$ having the same connected components. Then for every Borel
  map $x \mapsto g_x$ from $X \to \baire$, there exists a Borel map $x
  \mapsto h_x$ such that for every marker sequence $\{A_n\}_{n \in \N}$, if
  $\{A_n\}_{n \in \N}$ satisfies $x \mapsto h_x$ for $H$, then $\{A_n\}_{n
  \in \N}$ satisfies $x \mapsto g_x$ for $G$. 
\end{lemma}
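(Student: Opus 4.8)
The plan is to compare the two metrics $d_G$ and $d_H$ ball-by-ball, using local finiteness to keep balls finite and the equality of connected components to keep cross-distances finite. For $x \in X$ and $r \in \N$, let $B_G(x,r) = \{y \in X : d_G(x,y) \le r\}$ denote the closed $G$-ball; since $G$ is locally finite this is a finite set, and since $G$ and $H$ have the same connected components every $y \in B_G(x,r)$ satisfies $d_H(x,y) < \infty$. Define
\[
  \rho(x,r) = \max\{d_H(x,y) : y \in B_G(x,r)\} \in \N.
\]
The one estimate that matters is: if $A \subset X$ and $d_G(x,A) \le r$, then some $y \in A$ lies in $B_G(x,r)$, whence $d_H(x,A) \le d_H(x,y) \le \rho(x,r)$. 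Equivalently, $d_H(x,A) > \rho(x,r)$ implies $d_G(x,A) > r$.

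With this in hand I would define $h_x$ as follows. Replacing $g_x(n)$ by $\max(g_x(n), 1)$ changes nothing, since any marker sequence with $d_G(x,A_n) \ge \max(g_x(n),1)$ also has $d_G(x,A_n) \ge g_x(n)$; so I may assume $g_x(n) \ge 1$ for all $x$ and $n$. Set
\[
  h_x(n) = \rho(x,\, g_x(n) - 1) + 1.
\]
Now suppose a marker sequence $\{A_n\}_{n\in\N}$ satisfies $x \mapsto h_x$ for $H$, and fix $x \in X$. There is an $n$ with $d_H(x,A_n) \ge h_x(n) > \rho(x, g_x(n)-1)$, so by the estimate above $d_G(x,A_n) > g_x(n) - 1$, i.e.\ $d_G(x,A_n) \ge g_x(n)$. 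Thus this same $n$ witnesses that $\{A_n\}$ satisfies $x \mapsto g_x$ for $G$ at $x$; since $x$ was arbitrary, $\{A_n\}$ satisfies $x \mapsto g_x$ for $G$.

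The only point requiring care is that $x \mapsto h_x$ is Borel. For a locally finite Borel graph the distance function is Borel and, for each fixed $r$, the assignment $x \mapsto B_G(x,r)$ is a Borel map into the space of finite subsets of $X$; together with Borelness of $d_H$ this makes $(x,r) \mapsto \rho(x,r)$ Borel, being a maximum over a Borel-varying finite index set, and composing with the given Borel map $x \mapsto g_x$ shows $x \mapsto h_x$ is Borel. I expect this bookkeeping with Borel parametrizations of finite balls to be the only mild obstacle; the combinatorial heart of the argument is just the elementary ball-comparison inequality, and notably the marker sequence plays no role in the construction of $h$ — only the two graph metrics do.
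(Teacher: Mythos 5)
Your proof is correct and takes essentially the same route as the paper: in both, $h_x(n)$ is obtained by maximizing $d_H(x,\cdot)$ over the finite $G$-ball around $x$ whose radius is governed by $g_x(n)$, with local finiteness and equality of connected components guaranteeing this maximum is a well-defined natural number and the Borelness of the ball parametrization giving Borelness of $x \mapsto h_x$. Your use of radius $g_x(n)-1$ and the extra $+1$ merely tidies an off-by-one that the paper's one-line definition leaves implicit.
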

\begin{proof}
  Since the graphs are locally finite, there are only finitely many points
  a fixed distance from each $x \in X$. Hence, we may define
  $h_x(n)$ to be the
  least $k$ such that $d_{H}(x,y) \leq k$ for all $y \in X$ such that
  $d_{G}(x,y) \leq g_x(n)$.
\end{proof}

We now complete the proof of Theorem~\ref{thm:main}.

\begin{proof}[Proof of Theorem~\ref{thm:main}]
  Suppose $G$ is not smooth. Let $E$ be the equivalence relation graphed by
  $G$, and $E_t$ be the equivalence relation of tail equivalence on
  $\ramsey$.
  By the Glimm-Effros dichotomy, there must be some $E$-invariant
  Borel set $A$ such that $E \restrict A \cong_B E_t$. But then $G
  \restrict A$ and the graph $G_t$ from Lemma~\ref{lemma:E_t-graph} are two
  different locally finite Borel graphings of the same equivalence
  relation. Hence, by Lemma~\ref{lemma:graphing_invariant}, we can find a
  Borel $x \mapsto h_x$ from $A
  \to \baire$ so that no Borel marker sequence can satisfy $G \restrict A$ for
  $x \mapsto h_x$.
  Hence, any Borel extension of $x \mapsto h_x$ to a function $x \mapsto
  f_x$ defined on $X$ suffices to prove the theorem. 
\end{proof}

\section{Measure and category}

In this section, we prove the following:
\begin{prop}
  Suppose $G$ is a locally finite Borel graph on $X$, and $x \mapsto f_x$
  is a Borel map from $X \to \baire$. Then 
  \begin{enumerate}
  \item For every Borel probability measure $\mu$ on $X$, there is a
  $G$-invariant $\mu$-conull set $B$ and a Borel marker sequence
  $\{A_n\}_{n \in \N}$ for $G \restrict B$ such that for every $x \in X$,
  there is an $n$ such that $d_G(x,A_n) \geq f_x(n)$. 
  \item For every compatible Polish topology $\tau$ on $X$, there is a
  $G$-invariant $\tau$-comeager set $B$ and a Borel marker sequence
  $\{A_n\}_{n \in \N}$ for $G \restrict B$ such that for every $x \in X$,
  there is an $n$ such that $d_G(x,A_n) \geq f_x(n)$. 
  \end{enumerate}
\end{prop}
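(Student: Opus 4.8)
The plan is to build, for each of the two cases, a marker sequence that is ``sparse along a suitable grid'' relative to the prescribed function $x \mapsto f_x$. For the measure case, the starting point is the standard fact (going back to Slaman--Steel) that a countable Borel equivalence relation $E$ that is hyperfinite (equivalently, $\mu$-hyperfinite on a conull set) admits marker sequences $\{M_n\}_{n \in \N}$ with $\bigcap_n M_n = \emptyset$ and with any desired ``speed of thinning'': given any Borel $x \mapsto g_x$, one can arrange that for every $x$ there is an $n$ with $d_G(x, M_n) \geq g_x(n)$. But we are not assuming $G$ is hyperfinite. The key observation is that we do not need a global marker sequence — we are permitted to throw away a non-invariant Borel set, i.e.\ to pass to a $G$-invariant conull $B$. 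So first I would invoke the fact that every aperiodic countable Borel equivalence relation is $\mu$-hyperfinite on a conull invariant set after removing the (possibly non-conull, but here we only need: there is a conull invariant Borel $B$ on which $E$ is hyperfinite by the theorem of Connes--Feldman--Weiss / or more elementarily just use the Ornstein--Weiss / Slaman--Steel marker lemma on the conull part where $E$ is generated by a $\mu$-preserving or quasi-preserving action — actually the cleanest route: every countable Borel $E$ is hyperfinite on a $\mu$-conull set is FALSE in general, so instead use): for any Borel probability $\mu$, $E$ restricted to a conull invariant Borel set is \emph{generated by a single Borel automorphism} only when hyperfinite, so I should instead rely on the fact that the marker construction only needs a Borel linear (or tree) order on each class of bounded ``width,'' which exists $\mu$-a.e.\ by the Lusin--Novikov / reduction to $\N$.

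Concretely, the mechanism I would use is this: on a conull invariant $B$, pick a Borel family of finite ``nested tilings'' or a Borel maximal antichain argument. For each $n$, using local finiteness, greedily select a Borel maximal set $A_n \subset B$ of points that are pairwise at $G$-distance $> R_n(x)$ for a radius $R_n$ chosen large enough relative to $f$ — specifically choose a Borel function $n \mapsto$ (radius) that dominates $\sup\{f_y(n) : d_G(x,y) \text{ small}\}$ in an appropriate sense so that the $A_n$-ball around any $x$ of radius $f_x(n)$ misses $A_n$ for the chosen $n$. The standard trick to make this work for \emph{all} $x$ simultaneously and for \emph{some} $n$ (not all $n$) is a diagonalization against a Borel enumeration of the ``scales'': partition $B$ into countably many Borel pieces $C_0, C_1, \ldots$ (not invariant, which is fine) and on $C_n$ ensure the failure happens at stage $n$; then take $B' = B$ and note each class meets some $C_n$, so for each $x$ there is $n$ with $d_G(x,A_n) \geq f_x(n)$. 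Making the $A_n$ genuinely a marker \emph{sequence} for $G \restrict B$ (each $A_n$ a complete section) requires the radii to be finite and the maximality to give meeting every class; that is automatic from maximality.

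For the category case, the argument is parallel but uses the Kuratowski--Ulam theorem and a Baire category version of the marker lemma: on a comeager invariant Borel set, any aperiodic $E$ admits arbitrarily thin marker sequences — again the Slaman--Steel / Hjorth--Kechris style argument, or one can appeal directly to the fact that $E$ restricted to a comeager invariant set is generically hyperfinite (since generic ergodicity / the Kechris--Miller results on generic hyperfiniteness apply once we note $G$ is locally finite hence $E$ is countable and, after deleting a meager invariant set, one can use a Borel generic compression). So the plan for (2) mirrors (1): shrink to a comeager invariant $B$ where a Borel tree-ordering of bounded branching is available, build the $A_n$ by the same radius-$R_n$ maximal-separation construction, and diagonalize against a Borel partition $B = \bigcup_n C_n$ so that on $C_n$ the relevant ball of radius $f_x(n)$ avoids $A_n$.

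I expect the main obstacle to be isolating precisely which invariant conull/comeager set to pass to and justifying that on it the marker construction can be made to beat the \emph{arbitrary} Borel assignment $x \mapsto f_x$. The subtlety is that $f_x$ can grow wildly across a single class, so a single ``radius at stage $n$'' will not dominate $f$ uniformly on a class; this is exactly why we are allowed only to conclude ``for some $n$'' rather than ``for all $n$,'' and why the diagonalization across the partition $B = \bigcup_n C_n$ is essential — at stage $n$ we only need to dominate $f_x(n)$ for $x \in C_n$, and by local finiteness the points we must separate $A_n$ from near such an $x$ form a finite $G$-ball whose radius we control. Packaging this with a careful Borel recursion (building $A_0, A_1, \ldots$ in turn, each time deleting from future consideration only what we must, while keeping each $A_n$ a complete section) and verifying Borelness via Lusin--Novikov uniformization is the technical heart; the measure- and category-theoretic inputs (CFW/generic hyperfiniteness, Kuratowski--Ulam) enter only to guarantee that after passing to the conull/comeager invariant $B$ we have enough structure — indeed, I suspect one can avoid them entirely by working directly with the Slaman--Steel marker lemma on any aperiodic Borel $E$, which already gives marker sequences with empty intersection and controllable sparsity on the \emph{whole} space, so that $B = X$ may be taken and the measure/category hypotheses are red herrings except that the proposition as stated generously allows passing to $B$.
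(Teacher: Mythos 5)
There is a genuine gap, and in fact two. First, your closing suggestion that the measure/category hypotheses are ``red herrings'' and that one can take $B = X$ by running the Slaman--Steel construction on the whole space contradicts the main theorem of the paper: for every non-smooth locally finite Borel graph there is a Borel assignment $x \mapsto f_x$ that defeats \emph{every} Borel marker sequence on all of $X$ (Theorem~\ref{thm:main}), so no argument can avoid discarding an invariant null (resp.\ meager) set, and the whole point of the proposition is that the obstruction is invisible to measure and category. Second, the core mechanism you propose does not work. A Borel \emph{maximal} $R$-separated set $A_n$ forces every point to be \emph{close} to $A_n$ (within the separation radius), whereas the conclusion requires points to be \emph{far} from $A_n$ at some stage; in particular points of $A_n$ itself are at distance $0$. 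And the diagonalization ``partition $B = \bigunion_n C_n$ and ensure the failure at stage $n$ on $C_n$'' asks $A_n$ to avoid the ball of radius $f_x(n)$ around every $x \in C_n$ while still meeting every class; since $f$ is arbitrary, those balls can cover entire classes (e.g.\ if $C_n$ meets a class and $f_{\cdot}(n)$ is huge there), so such an $A_n$ need not exist. The detour through hyperfiniteness is also off: whether every countable Borel equivalence relation is $\mu$-hyperfinite on a conull set is an open problem, not a known falsehood, but in any case no such structure is needed.

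For comparison, the paper's argument is much softer and uses no separation or tiling construction at all. Take a Slaman--Steel decreasing sequence of Borel complete sections $B_0 \supset B_1 \supset \cdots$ with $\biginters_i B_i = \emptyset$, and set $C_{i,n} = \{x : d_G(x,B_i) < f_x(n)\}$. Local finiteness and the empty intersection give $\biginters_i C_{i,n} = \emptyset$ for each fixed $n$, with the $C_{i,n}$ decreasing in $i$. For the measure case one may assume $\mu$ is quasi-invariant, so $\mu(C_{i,n}) \to 0$ as $i \to \infty$ for each $n$; choosing $i_n$ with $\mu(C_{i_n,n})$ summably small and setting $A_n = B_{i_n}$ makes the bad set $\biginters_n C_{i_n,n}$ null, and one removes its (null) saturation to get the invariant conull $B$. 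For the category case one chooses $i_n$ so that $C_{i_n,n}$ fails to be comeager in the $n$th basic open set, forcing $\biginters_n C_{i_n,n}$ to be meager. The marker sequence is thus simply a well-chosen subsequence of the Slaman--Steel markers, with the measure or the topology entering exactly once, to select the subsequence; that selection step is where your proposal has nothing to put in its place.
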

\begin{proof}
  Let $B_0 \supset B_1 \supset B_2 \ldots$ be a decreasing sequence of
  Borel markers for $G$ with
  empty intersection. Such a sequence exists by [SlSt].
  Let $C_{i,n} = \{x \in X : d_G(x,B_i) < f_x(n)\}$. Note
  that since $\{B_n\}_{n \in \N}$ is decreasing with empty intersection,
  for each $n$, we have $\biginters_i C_{i,n} = \emptyset$.

  For part (1), we may assume as usual that $\mu$ is $G$-quasi-invariant.
  Observe that for each $n$, the $\mu$-measure of the sets $C_{i,n}$ goes
  to $0$. Hence, we may find a sequence $i_0, i_1, i_2, \ldots$ such that
  $\mu(C_{i_n,n}) \rightarrow 0$. Now choose our marker sequence to be
  $\{A_n\}_{n \in \N}$ where $A_n
  = B_{i_n}$. This marker sequence has the required property on the
  complement of the nullset $\biginters_{i} C_{i_n,n}$. 

  Part (2) follows using a similar argument, since relative to any basic
  open set, the set $C_{i,n}$ can be comeager for only finitely many $i$. 
\end{proof}

\end{document}